\newcommand{\erre} {{\mathbb {R}}}
\newcommand {\elle} {{\mathscr {L}}}
\newcommand {\elleo} {{\mathscr{L}_0}}
\def \acca{{\mathcal {H}}}
\def \cappa{{\mathcal {K}}}
\def\erren{{\erre^{ {N} }}}
\def\erreu{{\erre^{ {N+1} }}}
\def\av{``}
\def\cv{''}
\def\inn{\mbox{ in }}
\def\ass{\mbox{ as }}
\def\andd{ \quad\mbox{ and } \quad }
\def\ifff{ \quad\mbox{ if } \ }
\newcommand{\tende}{\rightarrow}
\newcommand{\ttende}{\longrightarrow}
\newcommand{\enne} {\mathbb{N}}
\newcommand{\frecciaf} {\longmapsto}
\newcommand{\inte} {\cap}
\newcommand\de{\partial}
\newcommand\partialj{\partial_{x_j}}
\newcommand\partialt{\partial_{t}}
\newcommand{\meno} {\smallsetminus}
\newcommand\eps{\varepsilon}
\newcommand\sigmastar{\sigma^\ast\!\!}
\newtheorem{theorem}{Theorem}[section]
\newtheorem{proposition}[theorem]{Proposition}
\newtheorem{lemma}[theorem]{Lemma}
\theoremstyle{remark}
\theoremstyle{definition}
\numberwithin{equation}{section}
\title[ On the Dirichlet problem in cylindrical domains
for evolution  PDE's]{ On the Dirichlet problem in cylindrical domains
for evolution Ole\v{\i}nik--Radkevi\v{c}  PDE's:\\ a Tikhonov-type theorem}
\author{Alessia E. Kogoj}
\address{Dipartimento di Scienze Pure e Applicate (DiSPeA)\\ 
				 Universit\`{a} degli Studi di Urbino ``Carlo Bo''\\
				 Piazza della Repubblica, 13 - 61029 Urbino (PU), Italy.}
\email{alessia.kogoj@uniurb.it}
\subjclass[2010]{35H10; 35K65; 35J70; 35J25; 31D05; 35D99}
\keywords{Dirichlet problem, Perron-Wiener solution, Boundary behavior of Perron-Wiener solutions, Hypoelliptic operators, Potential theory}
\begin{document}

\begin{abstract}

We consider the linear second order PDO's 
$$ \mathscr{L}  =  \mathscr{L}_0 - \partial_t : = \sum_{i,j =1}^N  \partial_{x_i}(a_{i,j}  \partial_{x_j} ) - \sum_{j=i}^N b_j  \partial_{x_j} - \partial _t,$$and assume  that $\mathscr{L}_0$ has nonnegative characteristic form and satisfies the Ole\v{\i}nik--Radkevi\v{c} rank hypoellipticity condition. These hypotheses allow the construction of Perron-Wiener solutions of the Dirichlet problems for $\mathscr{L}$ and $\mathscr{L}_0$ on bounded open subsets of $\mathbb R^{N+1}$  and of $\mathbb R^{N}$, respectively. 

 Our main result is the following Tikhonov-type theorem:  \\Let $\mathcal{O}:= \Omega \times ]0, T[$ be a bounded cylindrical domain of $\mathbb R^{N+1}$,
$\Omega \subset \mathbb R^{N},$   $x_0 \in \partial \Omega$ and \mbox{$0 < t_0 < T$}. Then $z_0 = (x_0, t_0) \in \partial \mathcal{O}$ is $\mathscr{L}$-regular for $\mathcal{O}$ if and only if $x_0$ is $\mathscr{L}_0$-regular for $\Omega$.

As an application, we derive a boundary regularity criterion for  degenerate Ornstein--Uhlenbeck operators.

\end{abstract}

\maketitle

%% keywords here, in the form: keyword \sep keyword

%% PACS codes here, in the form: \PACS code \sep code

%% MSC codes here, in the form: \MSC code \sep code
%% or \MSC[2008] code \sep code (2000 is the default)

%% \linenumbers

%% main text

\section{Introduction}
We  consider linear second order partial differential operators  of the type
 
 \begin{equation}\label{staz}  \mathscr{L}_0 :  =\sum_{i,j=1}^N  \partial_{x_i} \left(a_{ij}\partial_{x_j} \right)  + \sum_{j=1}^N b_j \partial_{x_j} 
\end{equation}
in an open set  $X$ of $\erre^N$, $N\geq 2,$ and their \av evolution\cv counterpart in $X\times \erre$
 \begin{equation}\label{evol}  \mathscr{L}  =   \mathscr{L}_0 - \de_t.
\end{equation}

We assume $ \mathscr{L}_0$ in \eqref{staz} is of non totally degenerate Ole\v{\i}nik and Radkevi\v{c} type, i.e., we assume 

\begin{itemize}
\item[(H1)]  $a_{ij}=a_{ji}, b_i \in C^\infty(X,\erre)$ and 

$$A(x):=(a_{ij}(x))_{i,j=1,\ldots,N}\geq 0\qquad \forall x\in X.$$

Moreover $$\inf_X a_{11}=: \alpha >0.$$
\item[(H2)] $  \mathrm{rank\ } \mathrm{Lie}  \{ X_1,\ldots,X_N,X_0 \}(x)=N\qquad \forall x\in X,$ 
where, $$X_i=\sum_{j=1}^N a_{ij}\partialj,\  i=1,\ldots,N,\andd X_0=\sum_{j=1}^Nb_j\partialj.$$

\end{itemize}
Hypotheses (H1) and (H2) imply that $\elleo$ is  hypoelliptic in $X$ (see \cite{OR}), that is:

\begin{center}$\Omega$ open subset of $X$, $u\in \mathcal{D}'(\Omega), \elleo u \in C^\infty(\Omega, \erre) \implies u\in C^\infty(\Omega, \erre).$ \end{center}
The same assumptions (H1) and (H2) also imply that $\elleo - \partialt$ is hypoelliptic in $X\times \erre$.  

We will show in Section 2 that $\elleo$ and $\elleo - \partialt$ endow $X$ and $X\times \erre$, respectively, with a local structure of $\sigma^\ast\!\!$-harmonic space, in the sense of [3], Chapter 6.  As a consequence, in particular, the Dirichlet problems 

\begin{equation*} 
\begin{cases}
 \elleo   u= 0 \mbox{ in } \Omega,    \\
  u|_{\partial \Omega} = \varphi, \end{cases}\andd
  \begin{cases}
 (\elleo -\partialt)  v= 0 \mbox{ in } \mathcal{O}:=\Omega  \times ]0,T[, \\
  v|_{\partial \mathcal{O}} = \psi, \end{cases}
\end{equation*}
have a generalized solution in the sense of Perron--Wiener, for every bounded open set $\Omega \subset \subset X,$ for every $T>0$, and for every $\varphi\in C(\partial \Omega, \erre)$ and \mbox{$\psi\in C(\partial \mathcal{O}, \erre)$}. We will denote such generalized solutions by, respectively, 
$$H^\Omega_\varphi \andd K^{\mathcal{O}}_\psi.$$
As usual, we say that a point $x_0\in \partial\Omega$ ($(x_0,t_0)\in \partial {\mathcal{O}}$) is $\elleo$-regular for $\Omega$ ($\elle$-regular for ${\mathcal{O}}$) if 
$$\lim_{x\ttende x_0} H^\Omega_\varphi(x)=\varphi (x_0)\qquad \forall \varphi\in C(\partial\Omega,\erre)$$ 
$$\left(\lim_{(x,t)\ttende (x_0,t_0)} K^{\mathcal{O}}_\psi(x,t)=\psi (x_0,t_0)\qquad \forall \psi\in C(\partial {\mathcal{O}},\erre)\right).$$

The aim of this paper is to prove the following theorem:
\begin{theorem}\label{main} Let $\Omega$ be a bounded open set with $\overline \Omega\subseteq X$, and let $x_0\in \partial\Omega$ and $t_0\in ]0,T[$. Then,  $x_0$ is $\elleo$-regular for $\Omega$ if and only if  $(x_0,t_0)$ is \mbox{$\elleo-\partialt$-regular} for ${\mathcal{O}}:=\Omega\times ]0,T[$.  \end{theorem}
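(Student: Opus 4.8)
The plan is to run the whole argument through the Bouligand barrier criterion supplied by the $\sigmastar$-harmonic space structure established in Section 2: a boundary point is regular (for $\elleo$ on $\Omega$, or for $\elle=\elleo-\partialt$ on ${\mathcal O}$) if and only if it admits a local barrier, i.e. a strictly positive hyperharmonic function on the trace of a neighbourhood of the point on the domain, tending to $0$ at the point. The statement then reduces entirely to transferring barriers between $\elleo$ and $\elle$, and I would split the proof into the two implications.

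For the implication \emph{$x_0$ is $\elleo$-regular $\Rightarrow (x_0,t_0)$ is $\elle$-regular} I would simply lift the spatial barrier. Let $w$ be an $\elleo$-barrier at $x_0$, so $w>0$ and $\elleo$-superharmonic on $\Omega\cap U$ for a neighbourhood $U$ of $x_0$, with $w(x)\to0$ as $x\to x_0$. The time-independent function $v(x,t):=w(x)$ satisfies $\elle v=\elleo w-\partialt w=\elleo w\le 0$, hence is $\elle$-superharmonic on ${\mathcal O}\cap(U\times\erre)$; it is strictly positive there because $x\in\Omega$ forces $x\ne x_0$, and $v(x,t)\to0$ as $(x,t)\to(x_0,t_0)$. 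Thus $v$ is an $\elle$-barrier at $(x_0,t_0)$ and that point is $\elle$-regular. This is the easy direction.

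The substance is the converse, \emph{$(x_0,t_0)$ is $\elle$-regular $\Rightarrow x_0$ is $\elleo$-regular}, where a $t$-independent barrier must be manufactured out of a genuinely space-time one. Here I would exploit the structural fact that the coefficients of $\elleo$ do not depend on $t$, so $\elle$ is invariant under the translations $(x,t)\mapsto(x,t+s)$; in particular every translate $W(x,t+s)$ of an $\elle$-superharmonic $W$ is again $\elle$-superharmonic. Starting from an $\elle$-barrier $W$ at $(x_0,t_0)$, defined and positive on ${\mathcal O}\cap(U\times I)$ with $t_0\in I$ and $W\to0$ at $(x_0,t_0)$, I set $w(x):=\inf_{t\in I}W(x,t)$. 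Writing $\overline W$ for $W$ extended by $+\infty$ off its domain, one has $w(x)=\inf_{s\in\erre}\overline W(x,t+s)$, so $w$ is at once $t$-independent and the infimum of a translation family of $\elle$-hyperharmonic functions. By the infimum-stability of hyperharmonic functions in the $\sigmastar$-harmonic space (after lower semicontinuous regularization), the $t$-independent function $w$ is $\elle$-hyperharmonic; since $\partialt w=0$ this reads $\elleo w=\elle w\le 0$, so $w$ is $\elleo$-superharmonic on $\Omega\cap U$. Finally $w\ge0$ and $w(x)\le W(x,t_0)\to0$ as $x\to x_0$.

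The main obstacle I anticipate is the \emph{nondegeneracy} of this extracted function: I must rule out $w\equiv0$ and upgrade $w\ge0$ to $w>0$, so that $w$ is a genuine $\elleo$-barrier. For each fixed $x\in\Omega\cap U$ the map $t\mapsto W(x,t)$ is positive, so its infimum over a compact time-interval is positive; the only danger is that the infimum over $I$ be approached as $t$ tends to an endpoint of $I$, where the barrier is uncontrolled. I would resolve this by the strong minimum principle for $\elleo$ on the connected set $\Omega\cap U$: a nonnegative $\elleo$-superharmonic function vanishing at an interior point vanishes identically, giving the dichotomy $w>0$ throughout or $w\equiv0$. The degenerate alternative is then excluded by choosing the time-interval $I$ and shrinking $U$ so that $W$ stays bounded below by a positive constant near the endpoints of $I$; this is precisely the step where the forward-parabolic nature of $\elle$ and the local positivity of $W$ are used. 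Once $w$ is seen to be a positive $\elleo$-barrier at $x_0$, Bouligand's criterion yields the $\elleo$-regularity of $x_0$, completing the equivalence.
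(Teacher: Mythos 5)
Your first implication (spatial barrier lifted to a $t$-independent space--time barrier) is exactly the paper's Step (1); the only caveat is that a Bouligand barrier is merely superharmonic, so you cannot literally compute $\elle v=\elleo w-\partial_t w$; the paper sidesteps this by using the fact, recalled in Section 2, that at a regular point one may take a barrier which is $\acca$-\emph{harmonic} on all of $\Omega$, hence smooth by hypoellipticity. That direction is fine.

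The converse is where your argument has genuine gaps. First, the reduction of $w(x)=\inf_{t\in I}W(x,t)$ to ``the infimum of a translation family of $\elle$-hyperharmonic functions'' does not go through: the translates $W(\cdot,\cdot+s)$ have no common domain containing the slice $\{t=t_0\}$ as $s$ ranges over the full interval, and the device of extending $W$ by $+\infty$ off its domain destroys superharmonicity (take a regular set $V$ straddling the boundary of the domain of $W$: admissible data $\varphi\le\overline W|_{\partial V}$ may be arbitrarily large on the part of $\partial V$ where $\overline W=+\infty$, so $h^V_\varphi$ exceeds $W$ where $W$ is small, violating axiom $(i)$ of superharmonicity). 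If instead you take the infimum only over a compact set of translates, so that a common domain exists, the resulting function is no longer $t$-independent, and a mere time-slice of an $\elle$-superharmonic function is \emph{not} in general $\elleo$-superharmonic: the inequality $\elleo b_0=\elle b+\partial_t b\le 0$ requires $\partial_t b\le 0$. Second, the nondegeneracy step is not secured: for a general Bouligand barrier $W$ one may well have $\inf_{t}W(x,t)\equiv 0$ (e.g.\ when $W$ is a Perron solution of data vanishing on the bottom $\Omega\times\{0\}$), your proposed lower bound for $W$ near the endpoints of $I$ is not furnished by regularity of $(x_0,t_0)$ alone, and the strong minimum principle you invoke for nonnegative $\elleo$-superharmonic functions is not among the tools established here for these degenerate operators.

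Both gaps are repaired simultaneously by the paper's actual mechanism, which replaces ``infimum over time'' by ``monotonicity in time'': one solves the Dirichlet problem in ${\mathcal{O}}$ with the explicit datum $\widehat h$ built from the function $h$ of Lemma \ref{due!} ($\widehat h\equiv\sup h$ for $0\le t\le\delta$ and $\widehat h(x,t)=h(x)$ for $t>\delta$), shows via time-translation invariance and the parabolic minimum principle (Lemma \ref{lemma31}) that the Perron solution $b=K^{\mathcal{O}}_{\widehat h|\partial{\mathcal{O}}}$ is monotone decreasing in $t$, and observes that $b\ge\widehat h>0$ because $\widehat h$ is subharmonic (Proposition \ref{duedue}), while $b\to 0$ at $z_0$ by the assumed $\elle$-regularity. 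The slice $b_0(x)=b(x,t_0)$ is then $\elleo$-superharmonic precisely because $\partial_t b\le 0$, positive, and vanishing at $x_0$. If you want to salvage your infimum idea, you would in effect have to prove this monotonicity first, at which point the infimum is superfluous.
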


When $\elle=\varDelta - \partial_t$ is the classical heat operator, our result re-establishes 
a theorem proved by Tikhonov in 1938 \cite{tychonov}.  Other proofs of the Tikhonov Theorem  were given by Fulks in 1956 and in 1957 \cite{fulks_56,fulks} and by Babu\v{s}ka and V\'{y}born\'{y} in 1962 \cite{babuska}.  Chan and Young extended the  Tikhonov Theorem to parabolic operators with H\"older continuous coefficients in 1977 \cite{chan_young}, and Arendt to parabolic operators with bounded measurable coefficients in 2000 \cite{arendt}. The corresponding version for $p$-Laplacian-type evolution operators has been proved by Kilpel\"{a}inen and Lindqvist  in 1996  \cite{kilp_lind}
 and by Banerjee and Garofalo in 2015 \cite{ban_gar}.

To the best of our knowledge, the only Tikhonov-type theorem for second order \av evolution\cv   sub-Riemannian PDO's appearing  in the literature is the result by Negrini \cite{negrini} in abstract $\beta$-harmonic spaces\footnote{For a definition of $\beta$-harmonic spaces see \cite{CC}.}.

This paper is organised  as follows. In Section 2, all the notions and results from
Potential Theory that we need are briefly recalled.  In particular, we recall the notion of 
$\sigmastar$-harmonic space  and then we prove that $\elle_0$ and $\elle$ endow $X$ and $X\times \erre$, 
respectively, with a local structure of $\sigmastar$-harmonic space. In this way, we derive the existence of a generalized solution in the sense of Perron--Wiener in both our settings.   Section 3 is devoted to two key results for the proof of the main theorem (Theorem \ref{main}), which is the content of  Section 4. Finally, combining our Tikhonov-type  theorem with a corollary of the Wiener--Landis-type criterion for Kolmogorov-type operators proved in \cite{kogoj_lanconelli_tralli_2017}, we establish a geometric boundary regularity criterion for degenerate Ornstein--Uhlenbeck operators.

\section{$\elleo$-harmonic and $\elle$-harmonic spaces}\label{sigmastar} 
\subsection{The $\sigma^\ast\!\!$-harmonic space}\label{prima}For the readers' convenience we recall the definition of $\sigma^\ast\!\!$-harmonic space supported on a an open set $E\subseteq\erre^p, p\geq 2$,  and refer  to Chapter 6 of the monograph \cite{BLU} for details.

 Let $\acca$ be a sheaf of functions in $E$ such that $\acca(V)$ is a linear subspace of $C(V,\erre)$, for every open set $V\subseteq E$. The functions in $\acca(V)$ are called {\it $\acca$-harmonic in $V.$} The open set $V$ is called  {\it $\acca$-regular} if 
\begin{itemize}
\item[$(i)$] $\overline V\subseteq E$ is compact;
\item[$(ii)$] for every $\varphi\in C(\partial V, \erre)$  there exists a unique function such that  \begin{center} \mbox{$h_\varphi^V(x)\tende \varphi(\xi)$} as $x\tende \xi$, for every $\xi\in\partial V;$\end{center}
\item[$(iii)$] $h_\varphi^V\geq 0$ if $\varphi\geq 0.$

\end{itemize}

A lower semicontinuous function $u: W\ttende ]-\infty, \infty],$ $W\subseteq E$ open, is called  {\it $\acca$-superharmonic} if 
\begin{itemize}
\item[$(i)$] $u\geq h_\varphi^V$ in $V$ for every $\acca$-regular open set $V$ with $\overline V\subseteq W$ and for every $\varphi\in C(\partial V,\erre)$ with $\varphi\leq u|_{\partial V};$
\item[$(ii)$] $\{x\in W\ |\ u(x)<\infty\}$ is dense in $W.$
\end{itemize}

We denote by $\overline{\acca}(W)$ the cone of the $\acca$-superharmonic functions in $W.$ 

The couple $(E,\acca)$ is called a $\sigmastar$-harmonic space if the following axioms hold:
\begin{itemize}
\item[(A1)] There exists a function $h\in \acca(E)$ such that $\inf h>0$.

\item[(A2)] If $(u_n)_{n\in\enne}$ is a monotone increasing sequence of $\acca$-harmonic functions in an open set $V\subseteq E$ such that $$\{x\in V\ |\ \sup_{n\in\enne} u_n(x)<\infty\}$$ is dense in $\Omega$, then 
$$u:=\sup_V u_n \mbox{\quad is $\acca$-harmonic in\ }  V.$$
\item[(A3)]  The family of the $\acca$-regular open sets is a basis of the Euclidean \mbox{topology} on $E$.
\item[(A4)] For every $x,y \in E$, $x\neq y$, there exist two nonnegative $\acca$-superharmonic and continuous functions $u,v$ in $E$ such that 
$$u(x)v(y)\neq u(y)v(x).$$
\item[(A5)] For every $x_0\in E$ there exists a nonnegative $\acca$-superharmonic and continuous function $S_{x_0}$ in $E$, such that 
$S_{x_0}(x_0)=0$ and $$\inf_{E\meno V} S_{x_0}>0$$
for every neighborhood $V$ of $x_0$. 

\end{itemize}
We now recall some crucial  results in $\sigmastar$-harmonic space theory; first of all the definition of Perron--Wiener solution to the Dirichlet problem.

Let $V$ be a bounded open set with $\overline V\subseteq E$, and let $\varphi: \partial V\ttende \erre$ be a bounded lower semicontinuous or upper semicontinuous function. Define 
$${{\mathcal{\overline U}}}^V_\varphi = \{ u\in \overline\acca(V)\ |\ \liminf_{x\ttende \xi} u(x) \geq \varphi (\xi) \quad \forall \xi\in\partial V\}$$ 
and 
\begin{equation} \label{pwinf} H_\varphi^V=: \inf \mathcal{\overline U}^V_\varphi.
\end{equation} 
Then $H_\varphi^V$ is $\acca$-harmonic in $\Omega.$ It is called the generalized Perron--Wiener solution to the Dirichlet problem  
\begin{equation*} 
\begin{cases}
 u\in \acca(V),   \\
  u|_{\partial V} = \varphi. \end{cases}\end{equation*}We also have 

\begin{equation} \label{pwsup} H_\varphi^V=: \sup \mathcal{\underline U}^V_\varphi,
\end{equation} 
where,
$${{\mathcal{\underline U}}}^V_\varphi = \{ v \in \underline\acca(V)\ |\ \limsup_{x\ttende \xi} v(x) \leq \varphi (\xi) \quad \forall \xi\in\partial V\}.$$ 
Here $\underline\acca(V):=-\overline\acca(V)$ denotes the cone of the $\acca$-{\it subharmonic} functions in $V.$

A point $y\in\partial V$ is called {\it $\acca$-regular}  for $V$ if 
$$\lim_{x\ttende y} H_\varphi^V(x)=\varphi(y)\qquad \forall \varphi\in C(\partial V,\erre).$$ 

On the $\sigmastar$-harmonic space Bouligand Theorem holds. Indeed: {\it a point $y\in\partial V$ is $\acca$-regular for $V$ if and only if there exists a $\acca$-barrier for $V$ at $y$}, i.e., if there exists a function $b$ $\acca$-superharmonic in $V\inte W,$ where $W$ is a neighborhood of $y,$ such that  

\begin{itemize}
\item[$(i)$] $b$ is $\acca$-superharmonic;
\item[$(ii)$] $b(x)>0\  \forall x\in V\inte W$ and $b(x)\ttende 0$ as $x\ttende y.$ 

\end{itemize}

For our purposes it is important to recall that if $y\in\partial V$ is $\acca$-regular for $V$ there exists a barrier function for $V$ at $y$ which is defined and $\acca$-harmonic all over $V.$

Finally, we recall the {\it minimum principle} for  $\acca$-superharmonic functions. 

Let $V$ be a bounded open set with $\overline V\subseteq E$ and let $u\in \overline \acca(V).$ If 
$$\liminf_{x\ttende y} u(x) \geq 0\quad \forall y\in \partial V,$$ then $u\geq 0$ in $V.$

\subsection{The $\elleo$-harmonic space}\label{elleo} 

Let $E$ be a bounded open subset of $X$ such that $\overline E\subseteq X.$ For every open set $V\subseteq E$ we let 

$$\acca(V)=\{ u\in C^\infty (V,\erre)\ |\ \elleo u = 0\inn V\}.$$
Then, $V\frecciaf \acca(V)$ is a a sheaf of functions such that $\acca(V)$ is a linear subspace of $C(V,\erre).$

If $u\in \acca(V)$ we will say that $u$ is $\acca$-harmonic or $\elle_0$-harmonic in $V.$

We have that 
\begin{equation}\label{sigmastar} (E,\acca) \mbox{  is a {\it $\sigmastar$-harmonic space}}.\end{equation} 
Before showing this statement we remark that a $C^2$-function $u$ in a open set $V$ is $\acca$-superharmonic if and only if 
$\elleo u \leq 0$ in $V$. This is a easy consequence of Picone's maximum principle (see e.g. \cite{kogoj_polidoro}, page 547). Now we are ready to prove \eqref{sigmastar}.

(A1) is satisfied since the constant functions are $\elleo$-harmonic.

(A2) -(A4) are proved in  \cite{kogoj_polidoro}. We would like to stress that our operators $\elleo$ are contained in the class considered in 
\cite{kogoj_polidoro} since the rank condition (H2) implies that both  $\elleo$ and  $\elleo - \beta,$ for every $\beta\geq 0,$ are hypoelliptic.

The axiom (A5) follows from the following Lemma which seems to have an independent interest in its own right.

\begin{lemma} \label{due!} Let us consider a linear second order PDO of the kind 
  \begin{equation*}  \mathcal{L} :  =\sum_{i,j=1}^N  a_{ij} \partial_{x_i x_j}   + \sum_{j=1}^N b_j \partial_{x_j}, 
\end{equation*}
where $a_{ij}=a_{ji}, b_j$ are continuous functions in $\overline Y$, where $Y$ is a bounded open subset of $\erren$. Suppose 
$$\inf_Y a_{11}:=\alpha >0 \andd \sum_{j=1}^N a_{jj} >0 \inn Y\footnote{We don't require $(a_{ij})_{i,j=1,\ldots,N}$ to be nonnegative definite. }.$$
Then, for every $x_0 \in Y$ there exists a function $h\in C^\infty (Y,\erre)$ such that 

\begin{itemize}
\item[$(i)$] $h(x_0)=0$ and $h(x)>0$ for every $x\neq x_0;$ 
\item[$(ii)$] $\mathcal{L} h >0$ in $X.$ 

\end{itemize}
\end{lemma}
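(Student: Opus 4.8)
The plan is to exhibit an explicit \emph{separable} barrier built from hyperbolic cosines, exploiting the fact that a product/sum structure annihilates every off-diagonal contribution of the (possibly indefinite) matrix $(a_{ij})$, so that only the diagonal entries $a_{jj}$ and the uniformly positive $a_{11}$ enter the computation.

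First I would fix $x_0=(x_1^0,\ldots,x_N^0)$ and write $s_j:=x_j-x_j^0$. Since $\overline Y$ is compact and the coefficients are continuous, set $M:=\max_j\sup_{\overline Y}(|a_{jj}|+|b_j|)<\infty$ and let $D:=\mathrm{diam}\,Y<\infty$, so that $|s_j|\le D$ on $Y$. For positive parameters $\mu_1,\ldots,\mu_N$ to be fixed later, I would define
$$ h(x):=2\sum_{j=1}^N\big(\cosh(\mu_j s_j)-1\big). $$
This $h$ is smooth, and each summand is nonnegative and vanishes exactly when $s_j=0$; hence $h(x_0)=0$ and $h(x)>0$ for $x\neq x_0$, which is property $(i)$. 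Because $h$ is a sum of functions of the single variables $x_j$, its mixed second derivatives vanish, so the off-diagonal entries $a_{ij}$ ($i\neq j$) — precisely those whose sign is uncontrolled — never enter $\mathcal{L}h$. A direct computation then gives
$$ \mathcal{L}h=\sum_{j=1}^N\Big(2\mu_j^2 a_{jj}\cosh(\mu_j s_j)+2\mu_j b_j\sinh(\mu_j s_j)\Big). $$

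Next I would isolate the $j=1$ term and use the uniform ellipticity $a_{11}\ge\alpha$. Factoring out $2\mu_1\cosh(\mu_1 s_1)\ge 2\mu_1>0$ and using $|\tanh|<1$ yields the uniform lower bound
$$ 2\mu_1^2 a_{11}\cosh(\mu_1 s_1)+2\mu_1 b_1\sinh(\mu_1 s_1)\ge 2\mu_1\big(\mu_1\alpha-\|b_1\|_\infty\big). $$
Choosing $\mu_1$ large enough that $\mu_1\alpha>\|b_1\|_\infty$ makes the right-hand side a fixed positive constant $c_0$. The remaining terms are controlled crudely: for $j\ge 2$ one has $|2\mu_j^2 a_{jj}\cosh(\mu_j s_j)+2\mu_j b_j\sinh(\mu_j s_j)|\le 2\mu_j^2 M\cosh(\mu_j D)+2\mu_j M\sinh(\mu_j D)$, and the right-hand side tends to $0$ as $\mu_j\to 0^+$. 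Fixing $\mu_1$ first and then taking $\mu_2,\ldots,\mu_N$ small enough that the total of these bounds is $<c_0$, I obtain $\mathcal{L}h>0$ throughout $Y$, which is property $(ii)$.

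The only delicate point is the indefiniteness of $(a_{ij})$ stressed in the footnote; the separable ansatz disposes of it by eliminating the off-diagonal terms altogether, after which a scale separation $\mu_1\gg\mu_2,\ldots,\mu_N$ lets the single uniformly elliptic direction $x_1$ dominate both the sign-indefinite diagonal entries $a_{jj}$ and all the first-order terms. I should note that this argument uses only $\inf_Y a_{11}=\alpha>0$ together with the boundedness of the coefficients and of $Y$; the trace hypothesis $\sum_j a_{jj}>0$ does not seem to be required for this construction.
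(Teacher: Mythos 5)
Your proof is correct, and it follows the same basic strategy as the paper's: a separable ansatz $h=f_1(x_1)+\sum_{j\ge 2}f_j(x_j)$ whose vanishing mixed partials discard the sign-uncontrolled off-diagonal coefficients, with a convex profile in the $x_1$-direction whose second-order term, paired with $a_{11}\ge\alpha$, is made to dominate the drift by taking the $x_1$-frequency large. The two constructions diverge, however, in the treatment of the transverse directions, and this is a genuine difference. The paper takes $f_1(x_1)=\exp\bigl(\sqrt{1+\lambda^2x_1^2}\,\bigr)-e$ with $\lambda$ large and $f_j(x_j)=x_j^2$ for $j\ge 2$, and in the key chain of inequalities simply discards the resulting term $2\sum_{j\ge 2}a_{jj}$; that step tacitly uses $\sum_{j\ge 2}a_{jj}\ge 0$, which is harmless in the paper's application (where $A\ge 0$) but is not literally supplied by the stated trace hypothesis $\sum_{j=1}^N a_{jj}>0$ once the footnote disclaims semidefiniteness. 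Your version replaces the quadratics by $\cosh(\mu_j s_j)-1$ with $\mu_2,\dots,\mu_N$ chosen small \emph{after} $\mu_1$ is fixed large, so that the transverse contributions are bounded in absolute value by a quantity strictly below the fixed positive constant $c_0$ produced by the $x_1$-term; this requires no sign information whatsoever on $a_{jj}$ for $j\ge 2$ and, as you correctly observe, renders the hypothesis $\sum_j a_{jj}>0$ superfluous. In short, your argument is marginally more general and cleaner at exactly the point where the paper's inequality is most delicate, at the modest cost of introducing $N$ scale parameters instead of one.
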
 
\begin{proof}For the sake of simplicity we assume $x_0=0.$ We define 
$$h(x)=E(\lambda x_1) + (x_2^2+\cdots + x_N^2), \quad x=(x_1,x_2, \ldots, x_N)\in \erren,$$
where $\lambda>0$ will be fixed below. Moreover, 
$$E(s)=\exp(\phi(s))- \exp(\phi(0))$$ and 
$$\phi(s)=\sqrt{1+s^2},\quad s\in \erre.$$ 
We have:
$$\phi(0)=1,\quad \phi(s)>1\quad \forall s\neq 0,\quad E(s)>0\quad \forall s\neq 0,\quad E(0)=0,$$
$$\phi'(s)=\frac{s}{\sqrt{1+s^2}},\quad \phi''(s)=\frac{1}{({1+s^2})^{\frac{3}{2}}}.$$
Hence 
$${\phi'}^2 +  \phi''= \frac{s^2}{{1+s^2}} + \frac{1}{({1+s^2})^{\frac{3}{2}}}\geq \frac{1}{2\sqrt{2}}\quad \forall s\in\erre.$$
On the other hand 
$$ E'=\exp (\phi)\phi',\quad E''=\exp (\phi)({\phi'}^2 +  \phi'').$$
Therefore, letting 
$$\beta:=\sup_X \sum_{j=1}^N |b_j|\qquad (<\infty) \andd \lambda =\sup_{x\in\overline{X}}|x|,$$ 
we get 
\begin{eqnarray*} 
\mathcal{L} h(x)&=& \lambda^2 E''(\lambda x_1) a_{11}(x) + \lambda E'(\lambda x_1)b_1 + 2 \sum_{j=2}^N(a_{jj}(x)+b_j(x)x_j)\\
\\&\geq& \exp(\phi(\lambda x_1)) \left( \frac{a_{11}(x)}{2\sqrt{2}} \lambda^2 -\lambda |b_1| \right) -2\sum_{j=2}^N |b_j||x_j| \\
&\geq&  \lambda^2 \left(\frac{\alpha}{2\sqrt{2}} - \frac{|b_1|}{\lambda}\right) - 2 \beta \lambda\\
&\geq&  \lambda^2 \left(\frac{\alpha}{2\sqrt{2}} - \frac{\beta}{\lambda}\right) - 2 \beta \lambda.
\end{eqnarray*} 
If $\lambda$ is big enough, this implies 
$$\mathcal{L}h>0\inn X.$$ 
Moreover 
$$h(0)=E(0)=0,\quad h(x)>0 \ifff x>0.$$
The proof is complete. \end{proof}

\subsection{The $\elle$-harmonic space} 

Let $\widehat E$ be a bounded open subset of $X\times\erre$ such that $\overline{\widehat E}\subseteq X\times\erre.$ For every open set 
$V\subseteq \widehat E$ we let

$$\cappa(V)=\{ u\in C^\infty (V,\erre)\ |\ \elle u = 0 \inn V\}.$$
Then, $V\frecciaf \cappa(V)$ is a a sheaf of functions making 

\begin{equation*} (\widehat{E},\cappa) \mbox{ a {\it $\sigmastar$-harmonic space}}.\end{equation*} 

This can be proved just by proceeding as in subsection \ref{elleo}. We call $\cappa$-harmonic or $\elle$-harmonic in a open set $V$ the solutions to $\elle u =0$ in $V.$

Here we prove some typical results of the present $\cappa$-harmonic space, that we will need in the proof of the main theorem of this paper.  We first show a \av parabolic\cv minimum principle for $\elle$-subharmonic functions in cylindrical domains.

\begin{proposition}\label{pallino} Let $\Omega$ be a bounded open subset of $X$ such that $\overline\Omega\subseteq X$  and let $T>0$. Consider the cylindrical domain ${\mathcal{O}}:=\Omega\times ]0,T[$ and define the  \av parabolic    boundary\cv  of ${\mathcal{O}}$ as follows 
$$\partial_p {\mathcal{O}} : = (\Omega\times \{0\})\times (\partial \Omega\times ]0,T]).$$ Then, if $u\in\overline{\cappa}({\mathcal{O}})$ is such that 
$$\liminf_{z\ttende\zeta} u(z)\geq 0\quad\forall \zeta\in\partial_p {\mathcal{O}},$$
we have $u\geq 0$ in ${\mathcal{O}}.$ 
\end{proposition}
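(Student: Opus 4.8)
The plan is to deduce this \av parabolic\cv\ minimum principle from the general minimum principle for $\cappa$-superharmonic functions recalled in Subsection~\ref{prima}. The only obstruction is that the latter demands the boundary inequality on the \emph{whole} topological boundary $\partial{\mathcal{O}}$, whereas here nothing is assumed on the upper lid $\Omega\times\{T\}$. To bridge this gap I would add to $u$ a superharmonic barrier that blows up as $t$ approaches the top, and work on slightly shorter cylinders so that the uncontrolled lid becomes an interior slice.

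Concretely, fix $s\in\,]0,T[$ and set $\mathcal{O}_s:=\Omega\times]0,s[$, so that $\overline{\mathcal{O}_s}\subseteq\widehat E$ (the $\cappa$-harmonic structure being local, we may take $\widehat E\supseteq\overline\Omega\times[0,T]$). For $\eps>0$ define
$$\psi_s(x,t):=\frac{1}{s-t},\qquad u_\eps:=u+\eps\,\psi_s\quad\inn\mathcal{O}_s.$$
Since $\psi_s\in C^\infty(\mathcal{O}_s,\erre)$ and $\elle\psi_s=\elleo\psi_s-\partialt\psi_s=-\frac{1}{(s-t)^2}<0$, the remark in Subsection~\ref{elleo} (a $C^2$ function $w$ with $\elle w\le 0$ is $\cappa$-superharmonic) gives $\psi_s\in\overline\cappa(\mathcal{O}_s)$; as $u|_{\mathcal{O}_s}\in\overline\cappa(\mathcal{O}_s)$ and $\overline\cappa$ is a cone, also $u_\eps\in\overline\cappa(\mathcal{O}_s)$.

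Next I would verify $\liminf_{z\to\zeta}u_\eps(z)\ge 0$ for every $\zeta\in\partial\mathcal{O}_s$, splitting $\partial\mathcal{O}_s$ into three parts. On the bottom and the lateral face, which are contained in $\partial_p{\mathcal{O}}$, the hypothesis gives $\liminf u\ge 0$ while $\eps\psi_s>0$; on the interior top $\Omega\times\{s\}$, which lies in the \emph{open} set ${\mathcal{O}}$ because $s<T$, the function $u$ is locally bounded from below by lower semicontinuity, whereas $\psi_s\to+\infty$ as $t\to s^-$, so $\liminf u_\eps=+\infty$; the corner points are absorbed into the lateral/parabolic condition together with lower semicontinuity. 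With the boundary inequality holding on all of $\partial\mathcal{O}_s$, the minimum principle of Subsection~\ref{prima} yields $u_\eps\ge 0$, i.e. $u(x,t)\ge -\eps/(s-t)$ on $\mathcal{O}_s$. Letting $\eps\to 0^+$ gives $u\ge 0$ on $\mathcal{O}_s$, and since every $(x,t)\in{\mathcal{O}}$ lies in some $\mathcal{O}_s$ with $t<s<T$, we conclude $u\ge 0$ on ${\mathcal{O}}$.

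The main difficulty is precisely the asymmetry in the time variable: the top lid carries no hypothesis, so a direct application of the minimum principle on ${\mathcal{O}}$ fails. The device that resolves it is the pairing of the shorter cylinder $\mathcal{O}_s$ with the barrier $1/(s-t)$: shrinking the time interval turns the uncontrolled lid into an interior slice $\Omega\times\{s\}$ on which superharmonicity forces local lower boundedness, and the blow-up of the barrier there makes the boundary inequality hold for free, while the limit $\eps\to0$ removes the barrier. A secondary point to keep honest is the behaviour at the bottom corner $\partial\Omega\times\{0\}$, which must be subsumed into the parabolic-boundary hypothesis (or controlled through lower semicontinuity along the lateral face).
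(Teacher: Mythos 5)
Your proof is correct and follows essentially the same route as the paper: the same barrier $\eps/(s-t)$ added on a shortened cylinder $\Omega\times]0,s[$, the same verification that the blow-up of the barrier handles the (now interior) top slice, and the same appeal to the minimum principle followed by $\eps\to 0^+$ and exhaustion in $s$. Your treatment of the top slice via lower semicontinuity of $u$ is in fact slightly more careful than the paper's wording at that point, but the argument is identical in substance.
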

\begin{proof}For every arbitrarily fixed $\widehat{T}\in ]0,T[$ we let $\widehat{{\mathcal{O}}}=\Omega\times ]0,\widehat{T}[.$ We will prove that $u\geq 0$ in $\widehat{{\mathcal{O}}}$. Since $\widehat{T}$ is  arbitrarily fixed in $]0,T[$, this will give the proof of our lemma.  To this end, given any $\eps>0$, we define 
$$u_\eps (z)= u_\eps (x,t) :=  u(x,t) + \frac{\eps}{\widehat{T}-t},\quad z\in\widehat{{\mathcal{O}}}.$$
Since $u$ is $\cappa$-superharmonic in ${\mathcal{O}}$ and 
$$\elle  \frac{\eps}{\widehat{T}-t} = - \eps \de_t  \frac{1}{\widehat{T}-t}= -   \frac{\eps}{(\widehat{T}-t)^2}<0 \inn \widehat{{\mathcal{O}}}, $$
then $u_\eps$ is $\cappa$-superharmonic in ${\mathcal{O}}$. Moreover 
$$\liminf_{z\ttende\zeta} u_\eps (z)\geq 0\quad\forall \zeta\in\partial_p \widehat{{\mathcal{O}}},$$
and, for every $\xi \in \Omega,$ 
$$\liminf_{z\ttende (\xi,\widehat{T})} u_\eps (z)\geq u(\eps,\widehat{T}) + \liminf_{t\nearrow \widehat{T} }\frac{\eps}{\widehat{T}-t}=\infty. $$
By the minimum principle recalled in subsection \ref{prima}, we have $u_\eps \geq 0$ in $\widehat{{\mathcal{O}}}$. Letting $\eps$ go to zero we have $u_\eps\geq 0$ in $\widehat{{\mathcal{O}}}$, thus completing the proof. \end{proof}
\begin{proposition}\label{duedue} Let $\Omega\subseteq X$ be open and let $T_0$ and  $T\in \erre, $ such that \mbox{$0<T_0<T.$} Let ${\mathcal{O}}:=\Omega \times ]0,T[$ and 
$u: {\mathcal{O}} \ttende \erre$ be such that the restrictions $u|_{\Omega \times ]0,T_0[}$ and $u|_{\Omega \times ]T_0,T[}$ are $\cappa$-superharmonic. Then, if 
\begin{equation} \label{star} \liminf_{\substack{z\ttende (\xi,T_0)\\ (x,t)\in {\mathcal{O}}} } u (x,t)=  \liminf_{\substack{z\ttende (\xi,T_0)\\ t <T_0\\(x,t)\in {\mathcal{O}}} } u (x,t)= u(\xi,T_0)\quad \forall \xi\in\Omega,\end{equation} 
the function $u$ is $\cappa$-superharmonic in $\Omega\times]0,T[.$
\end{proposition}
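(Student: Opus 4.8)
The plan is to verify the three defining properties of $\overline{\cappa}(\mathcal{O})$ directly. Lower semicontinuity off the slice $\{t=T_0\}$ and the density of the finiteness set are inherited from the hypothesis that $u$ is $\cappa$-superharmonic on $\Omega\times]0,T_0[$ and on $\Omega\times]T_0,T[$; at a slice point $(\xi,T_0)$ the first equality in \eqref{star} is exactly lower semicontinuity. Thus the whole matter reduces to the super-mean-value inequality: fixing a $\cappa$-regular open set $V$ with $\overline V\subseteq\mathcal{O}$ and a $\varphi\in C(\partial V,\erre)$ with $\varphi\le u|_{\partial V}$, I must show $u\ge h^V_\varphi$ on $V$. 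Writing $w:=u-h^V_\varphi$, the function $w$ is lower semicontinuous on $V$, is $\cappa$-superharmonic on each of the two pieces $V^-:=V\inte\{t<T_0\}$ and $V^+:=V\inte\{t>T_0\}$ (a superharmonic minus a harmonic function), and satisfies $\liminf_{z\ttende\zeta}w(z)\ge0$ at every $\zeta\in\partial V$, since there $\liminf u\ge\varphi=\lim h^V_\varphi$ by regularity of $V$. I then split the argument at the slice.

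First I would prove $w\ge0$ on the lower piece $V^-$. The obstruction is that $V^-$ has part of its topological boundary on the slice $\{t=T_0\}$, where no a priori sign for $w$ is available; this is precisely the point where the \emph{parabolic} nature of $\elle$ must be exploited. I copy the device used in Proposition \ref{pallino}: for $\eps>0$ set $w_\eps:=w+\eps/(T_0-t)$. Since $\elle\big(\eps/(T_0-t)\big)=-\eps/(T_0-t)^2<0$ on $V^-$, the added term is $\cappa$-superharmonic there, so $w_\eps\in\overline{\cappa}(V^-)$. On the part of $\partial V^-$ lying in $\{t<T_0\}$ one has $\liminf w_\eps\ge\liminf w\ge0$, while on the slice part the barrier forces $\liminf w_\eps=+\infty$; hence $\liminf_{z\ttende\zeta}w_\eps(z)\ge0$ for every $\zeta\in\partial V^-$. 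The minimum principle recalled in Subsection \ref{prima} then gives $w_\eps\ge0$ on $V^-$, and letting $\eps\tende0$ yields $w\ge0$ on $V^-$.

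Next I would transfer positivity to the slice and then to the upper piece. For $(\xi,T_0)\in V$ (so that $\xi\in\Omega$), the second equality in \eqref{star}, together with the continuity of $h^V_\varphi$, gives $\liminf_{z\ttende(\xi,T_0),\,t<T_0}w(z)=w(\xi,T_0)$; since $w\ge0$ on $V^-$ this forces $w(\xi,T_0)\ge0$ on the whole interior slice. Now on $V^+$ the slice is the \emph{past} boundary, and here I use the \emph{full} liminf in \eqref{star}: it shows $\liminf_{z\ttende(\xi,T_0)}w(z)=w(\xi,T_0)\ge0$, so a fortiori $\liminf_{z\ttende(\xi,T_0),\,z\in V^+}w(z)\ge0$; on the remaining part of $\partial V^+$ (contained in $\partial V$) the liminf of $w$ is $\ge0$ as noted above. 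Applying the minimum principle of Subsection \ref{prima} to $w\in\overline{\cappa}(V^+)$ gives $w\ge0$ on $V^+$. Combining the three regions, $w\ge0$ on all of $V$, i.e. $u\ge h^V_\varphi$, which is the desired inequality.

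The main obstacle, and the crux of the argument, is the lower piece $V^-$: there the slice sits on the \emph{top} of the sub-region, where no sign information is available, so the ordinary minimum principle cannot be applied to $w$ itself. The resolution is the time-barrier $\eps/(T_0-t)$, which blows up along the slice and thereby removes the uncontrolled boundary portion, exactly as in Proposition \ref{pallino}. Once $w\ge0$ is known on $V^-$, the two one-sided limits encoded in \eqref{star} do the bookkeeping needed to push the inequality across the slice and onto $V^+$.
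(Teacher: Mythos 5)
Your proof is correct, and its engine is the same as the paper's: the time-barrier $\eps/(T_0-t)$ combined with the minimum principle to control the piece below the slice, followed by the one-sided liminf in \eqref{star} to carry the inequality onto the slice itself. Where you diverge is in what you choose to verify. The paper does not check the defining inequality $u\geq K^V_\varphi$ on all of $V$ for every $\cappa$-regular $V$; it only proves the pointwise inequality $u(z)\geq K^V_\varphi(z)$ at the centre $z=(\xi,T_0)$ for a basis of regular neighbourhoods, and then invokes the local criterion for superharmonicity (Corollary 6.4.9 in \cite{BLU}) to conclude. Consequently the paper never touches the upper piece $V\inte\{t>T_0\}$, and the first equality in \eqref{star} enters only through lower semicontinuity. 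You instead verify the definition of $\overline{\cappa}({\mathcal{O}})$ directly, which forces your third step --- propagating $w\geq 0$ from the slice into $V\inte\{t>T_0\}$ by a second application of the minimum principle, for which you genuinely need the full (two-sided) liminf in \eqref{star}. Your version is longer but self-contained, avoiding the abstract basis criterion; the paper's is shorter but leans on that potential-theoretic lemma. One small point worth making explicit in your $V^-$ step: to conclude that $\liminf w_\eps=+\infty$ on the slice portion of $\partial\bigl(V\inte\{t<T_0\}\bigr)$ you need $w$ bounded below near that portion, which follows from the lower semicontinuity of $u$ (already established) on the compact set $\overline V\subseteq{\mathcal{O}}$ together with the boundedness of $K^V_\varphi$.
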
 
\begin{proof} Since $u$ is lower semicontinuous in $\Omega\times ]0,T_0[$ and in $\Omega\times ]T_0,T[$, the assumption \eqref{star} 
implies that u is lower semicontinuous in ${\mathcal{O}}=\Omega\times ]0,T[.$

To prove that $u$ is $\cappa$-harmonic in ${\mathcal{O}}$ we will show the following claim. \\
{\it Claim.\ }  For every $z\in {\mathcal{O}}$ there exists a basis $B_z$ of $\cappa$-regular neighborhoods of $V$ such that 
$$u(z)\geq K_\varphi^V(z)\qquad \forall\varphi\in C(\partial V,\erre), u|_{\partial V} \geq \varphi.$$
Here $K_\varphi^V$ denotes the unique $\cappa$-harmonic function in $V$, continuous up to $\partial V$ and such that 
$K_\varphi^V|_{\partial V}=\varphi.$ 

From this Claim our assertion follows thanks to Corollary 6.4.9 in \cite{BLU}. 

If $z\in \Omega \times ]0,T_0[$ or if $z\in \Omega \times ]0,T[$, the Claim is satisfied since $u$ is \mbox{$\cappa$-superharmonic} both in 
$\Omega \times ]0,T_0[$ and in $\Omega \times ]0,T[$. Then it remains to prove the Claim for every point $\zeta = (\xi, T_0), \xi \in \Omega.$ Let $B_\rho = (V)$ be a basis of $\cappa$-regular neighborhoods of $\zeta$ such that $\overline{V}\subseteq {\mathcal{O}}.$ Let $\varphi\in C(\partial V, \erre), \varphi \le u|_{\partial V}.$  Then $u - K_\varphi^V$ is $\cappa$-superharmonic in $\Omega \times ]0,T_0[$ and 
$$\liminf_{z\ttende z'} u(z) \geq u(z') - u(z') \geq 0\qquad \forall z'\in\partial_p \Omega \times ]0,T_0[.$$ 
Therefore, by Proposition \ref{pallino}, 
$$ u - K_\varphi^V\geq 0\inn V \inte \{t<T_0\}.$$ 
As a consequence, keeping in mind assumption \eqref{star},
$$ u(\xi,T_0) = \liminf_{\substack{(x,t) \ttende (\xi,\tau)\\ t<T_0 }}  u(x,t) \geq \liminf_{\substack{(x,t) \ttende (\xi,T_0)\\ t<T_0 }}  K_\varphi^V(x,t) =  K_\varphi^V(\xi,T_0),$$
that is,

$$ u(\xi,T_0) \geq K_\varphi^V(\xi,T_0).$$
This completes the proof.\end{proof}

\section{Some preliminary results} 

The proof of our main theorem rests on the following two lemmata.

\begin{lemma}\label{lemma31} Let $\Omega$ be a bounded open set such that $\overline{\Omega}\subseteq X$, and let \mbox{${\mathcal{O}}:=\Omega \times ]0,T[$}, $T\in\erre, T>0.$ Let $\varphi : \partial {\mathcal{O}} \ttende \erre$ be upper semicontinuous and such that $t\frecciaf \varphi (x,t)$ is monotone decreasing, $\forall x \in \partial \Omega$ and 
$$\varphi (x,0) = M = \sup_{\partial {\mathcal{O}}} \varphi\qquad (M\in\erre).$$ 
Then, the Perron solution $K_\varphi^{\mathcal{O}}$ is monotone decreasing w.r.t. the variable $t$: more precisely 
$$ t\frecciaf K_\varphi^{\mathcal{O}}(x,t) \mbox { is monotone decreasing for every fixed } x\in\Omega.$$ 
\end{lemma}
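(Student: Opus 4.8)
The plan is to exploit the invariance of $\elle=\elleo-\partialt$ under translations in the variable $t$ (the coefficients of $\elleo$ being $t$-independent): if $u$ is $\elle$-superharmonic, so is every $t$-shift $u(\cdot,\cdot-h)$. I would then compare such shifts with the members of the upper Perron class ${\mathcal{\overline U}}^{\mathcal O}_\varphi$, which by definition satisfy $\liminf_{z\ttende\zeta}u(z)\geq\varphi(\zeta)$ on $\partial{\mathcal O}$. Fix $h\in\,]0,T[$. For an arbitrary $u\in{\mathcal{\overline U}}^{\mathcal O}_\varphi$ and $\eps>0$ I set
\[
\Psi(x,t):=\begin{cases} M & \text{if } 0<t\leq h,\\ u(x,t-h) & \text{if } h<t<T,\end{cases}\qquad \Psi_\eps:=\Psi+\frac{\eps}{T-t}.
\]
The goal is to show $\Psi_\eps\in{\mathcal{\overline U}}^{\mathcal O}_\varphi$. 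Granting this, \eqref{pwinf} yields $\Psi_\eps\geq K_\varphi^{\mathcal O}$ on ${\mathcal O}$; letting $\eps\to0$ gives $u(x,t-h)\geq K_\varphi^{\mathcal O}(x,t)$ for every $h<t<T$, and taking the infimum over $u\in{\mathcal{\overline U}}^{\mathcal O}_\varphi$ together with $K_\varphi^{\mathcal O}=\inf{\mathcal{\overline U}}^{\mathcal O}_\varphi$ produces $K_\varphi^{\mathcal O}(x,t-h)\geq K_\varphi^{\mathcal O}(x,t)$. Since $h\in\,]0,T[$ is arbitrary, this is the claimed monotonicity.

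To prove $\Psi_\eps\in{\mathcal{\overline U}}^{\mathcal O}_\varphi$ I must check $\elle$-superharmonicity and the boundary inequality. On $\Omega\times\,]0,h[$ the function $\Psi$ is the constant $M$, hence $\elle$-superharmonic, and on $\Omega\times\,]h,T[$ it equals the $t$-translate $u(\cdot,\cdot-h)$, again $\elle$-superharmonic; the gluing across $\{t=h\}$ is handled by Proposition \ref{duedue} with $T_0=h$. Its hypothesis \eqref{star} asks, for every $\xi\in\Omega$, that the full $\liminf$ of $\Psi$ at $(\xi,h)$ equal both the one-sided $\liminf$ from $\{t<h\}$ and the value $\Psi(\xi,h)=M$. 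The limit from below is $M$; the limit from above is $\liminf_{(x,s)\ttende(\xi,0)}u(x,s)\geq\varphi(\xi,0)=M$, precisely because $u\in{\mathcal{\overline U}}^{\mathcal O}_\varphi$ and $\varphi(\cdot,0)\equiv M=\sup_{\partial{\mathcal O}}\varphi$. Thus \eqref{star} holds and $\Psi$ is $\elle$-superharmonic in ${\mathcal O}$; since $\eps/(T-t)$ is smooth with $\elle\bigl(\eps/(T-t)\bigr)=-\eps/(T-t)^2<0$, the sum $\Psi_\eps$ is $\elle$-superharmonic as well.

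For the boundary inequality I would split $\partial{\mathcal O}$ into its bottom, lateral and top parts. Near the bottom $\overline\Omega\times\{0\}$ one has $\Psi\equiv M=\varphi(\cdot,0)$; on the lateral face $\partial\Omega\times\,]0,T[$, for $\tau<h$ again $\Psi\equiv M\geq\varphi$, while for $\tau\geq h$ the monotonicity of $t\frecciaf\varphi(\xi,t)$ gives $\liminf_{z\ttende(\xi,\tau)}\Psi=\liminf_{(x,s)\ttende(\xi,\tau-h)}u(x,s)\geq\varphi(\xi,\tau-h)\geq\varphi(\xi,\tau)$. The interior top face $\Omega\times\{T\}$—whose points lie outside the parabolic boundary and which $\Psi$ need not dominate—is exactly the reason for the penalisation, since $\eps/(T-t)$ forces $\liminf_{z\ttende(\xi,T)}\Psi_\eps=+\infty\geq\varphi$. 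I expect the crux to be the seam $\{t=h\}$: verifying \eqref{star} in Proposition \ref{duedue}, where the two hypotheses $\varphi(\cdot,0)=M$ and $u\in{\mathcal{\overline U}}^{\mathcal O}_\varphi$ are what guarantee that the downward, forward-in-time jump of $\Psi$ across $t=h$ has the sign compatible with $\elle$-superharmonicity. The remaining estimates reduce to the monotonicity of $\varphi$ and the identity $M=\sup_{\partial{\mathcal O}}\varphi$.
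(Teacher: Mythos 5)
Your proof is correct, but it follows a genuinely different route from the paper's. The paper works with \emph{both} Perron classes: for $u\in\overline{\mathcal{U}}^{\mathcal{O}}_\varphi$ and $v\in\underline{\mathcal{U}}^{\mathcal{O}}_\varphi$ it forms the difference $w(x,t)=u(x,t)-v(x,t+\delta)$ on the truncated cylinder $\Omega\times\,]0,T-\delta[$, checks $\liminf w\geq 0$ on the parabolic boundary (the bottom via $\varphi(\cdot,0)=M$ together with the bound $v\le M$ from the maximum principle for subharmonic functions, the lateral face via the monotonicity of $\varphi$), and concludes $w\geq 0$ by the parabolic minimum principle of Proposition \ref{pallino}; the identity $K_\varphi^{\mathcal{O}}=\inf\overline{\mathcal{U}}^{\mathcal{O}}_\varphi=\sup\underline{\mathcal{U}}^{\mathcal{O}}_\varphi$ then yields the monotonicity. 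You instead stay entirely within the upper class: you time-shift a single $u\in\overline{\mathcal{U}}^{\mathcal{O}}_\varphi$, cap it by $M$ for $t\le h$, glue across the seam $\{t=h\}$ via Proposition \ref{duedue} (whose hypothesis \eqref{star} you verify correctly using $\liminf_{z\ttende(\xi,0)}u\geq\varphi(\xi,0)=M$), and handle the top face with the penalization $\eps/(T-t)$. The trade-offs: the paper's argument needs the two-sided representation \eqref{pwinf}--\eqref{pwsup} and Proposition \ref{pallino}, but avoids any gluing; yours needs only \eqref{pwinf} but relies on Proposition \ref{duedue} (which the paper reserves for Lemma \ref{lemma32}) and reproduces internally the truncation/penalization device that the paper has already packaged once and for all inside Proposition \ref{pallino}. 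Both proofs ultimately rest on the same three ingredients --- time-translation invariance of $\elle$, the monotonicity of $t\frecciaf\varphi(\xi,t)$ on the lateral boundary, and the normalization $\varphi(\cdot,0)=M=\sup_{\partial\mathcal{O}}\varphi$ at the bottom --- so the difference is one of architecture rather than of substance. One small point worth making explicit in your write-up: the superharmonicity of $\Psi_\eps=\Psi+\eps/(T-t)$ uses that $\overline{\cappa}$ is a convex cone (sum of a superharmonic function and a $C^2$ function with $\elle\le 0$ is superharmonic); the paper uses the same fact silently in Proposition \ref{pallino}, so this is not a gap relative to the paper's own standard of rigor.
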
 

\begin{proof} 
For every fixed $\delta\in ]0,T[$ let us define 
$$h(x,t)=K_\varphi^{\mathcal{O}}(x,t) - K_\varphi^{\mathcal{O}}(x,t+\delta), \ x\in\Omega, 0<t<T-\delta.$$
It is enough to prove that $h\geq 0$ in ${\mathcal{O}}_\delta:= \Omega\times ]0,T-\delta[.$ To this end we show that, for every $u\in \overline{\mathcal{U}}^{\mathcal{O}}_\varphi$ and $v\in \underline{\mathcal{U}}^{\mathcal{O}}_\varphi$, the function 
$$w(x,t)=u(x,t)-v(x,t+\delta)$$ is nonnegative in ${\mathcal{O}}_\delta$. Now, we have:
\begin{itemize}
\item[$(a)$] $w$ is $\cappa$-superharmonic in ${\mathcal{O}}_\delta$, since $u\in \overline{\mathcal{K}}({\mathcal{O}})$ and $(x,t)\frecciaf v(x,t+\delta)$ 
is $\cappa$-subharmonic in ${\mathcal{O}}_\delta$ being $v\in \underline{\mathcal{K}}({\mathcal{O}})$ and $\elle$  translation invariant in the variable $t$.

\item[$(b)$] For every $\overline{x} \in \Omega,$ 
\begin{eqnarray*} \liminf_{(x,t)\ttende (\overline{x},0)} w(x,t) &\geq&  \liminf_{(x,t)\ttende (\overline{x},0)} u(x,t) -  \liminf_{(x,t)\ttende (\overline{x},0)} v(x,t+\delta) \\ &\geq& \varphi(\overline{x}, 0) - v(\overline{x},\delta)\\&=& M-v(\overline{x},\delta)\geq 0.\end{eqnarray*} 

We remark that $v\le M$ in ${\mathcal{O}}$ since $v$ is $\cappa$-subharmonic and 

$$\limsup_{z\ttende \zeta} v(z) \le \varphi(\zeta) \le M \quad \forall\zeta\in \partial {\mathcal{O}}.$$
Here we use the maximum principle for subharmonic functions.

\item[$(c)$] 

For every $\zeta=(\xi,\tau)$, $\xi\in\partial\Omega, 0<\tau<T-\delta,$

$$ \liminf_{(x,t)\ttende (\xi,\tau)} w(x,t) \geq \varphi(\xi,\tau) - \varphi(\xi,\tau +\delta)\geq 0,$$ by hypotesis. 

\end{itemize}
From $(a)$, $(b)$ and $(c)$ and the minimum principle for superharmonic functions we get 
$$w\geq 0\inn {\mathcal{O}}_\delta.$$ This completes the proof. 
\end{proof}

With Lemma \ref{lemma31} at hand we can easily prove the following key result for our main theorem.

\begin{lemma}\label{lemma32} Let $\Omega$ be a bounded open set such that $\overline{\Omega} \subseteq X,$ and let ${\mathcal{O}}:=\Omega\times ]0,T[$, $T\in\erre$, $T>0.$  Let $z_0=(x_0,t_0)\in \partial\Omega\times ]0,T[$ be a $\elle$-regular boundary point.

 Then there exists a function $b\in\cappa({\mathcal{O}})$ such that 
\begin{itemize}
\item[$(i)$] $b$ is an $\elle$-barrier for ${\mathcal{O}}$ at $z_0$;

\item[$(ii)$] $t\frecciaf b(x,t)$ is monotone decreasing for every fixed $x\in\Omega.$ 

\end{itemize}

\end{lemma}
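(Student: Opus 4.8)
The plan is to start from the hypothesis that $z_0 = (x_0, t_0)$ is $\elle$-regular and produce a barrier that is, in addition, monotone decreasing in $t$. Since $z_0$ is $\elle$-regular, the Bouligand-type theorem recalled in subsection \ref{prima} guarantees the existence of a barrier for ${\mathcal{O}}$ at $z_0$ which is in fact $\elle$-harmonic on all of ${\mathcal{O}}$; I would realize this barrier concretely as a Perron--Wiener solution $K_\varphi^{\mathcal{O}}$ for a suitably chosen boundary datum $\varphi$. The idea is to pick $\varphi \in C(\partial {\mathcal{O}}, \erre)$ so that $\varphi(z_0)=0$, $\varphi > 0$ on $\partial {\mathcal{O}} \meno \{z_0\}$, and $\varphi$ attains its maximum on the bottom face $\Omega \times \{0\}$, with $t \frecciaf \varphi(x,t)$ monotone decreasing along each lateral line. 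Concretely one can take something like $\varphi(x,t) = |x - x_0|^2 + (c - t)$ restricted to $\partial {\mathcal{O}}$ for a constant $c \ge T$, or more carefully a function that vanishes only at $z_0$; the two requirements I must reconcile are that $\varphi$ be a genuine barrier datum at $z_0$ (positive away from $z_0$, vanishing at $z_0$) and that it satisfy the monotonicity and top-value hypotheses of Lemma \ref{lemma31}.

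Once such a $\varphi$ is fixed, I would set $b := K_\varphi^{\mathcal{O}}$. Because $z_0$ is $\elle$-regular and $\varphi$ is continuous with $\varphi(z_0)=0$ and $\varphi > 0$ elsewhere, the Perron solution satisfies $b(z) \ttende 0$ as $z \ttende z_0$, and the minimum principle forces $b > 0$ throughout ${\mathcal{O}}$; this yields property $(i)$, namely that $b$ is an $\elle$-barrier for ${\mathcal{O}}$ at $z_0$, and $b \in \cappa({\mathcal{O}})$ by construction. For property $(ii)$, I would invoke Lemma \ref{lemma31} directly: the boundary datum $\varphi$ has been arranged to be upper semicontinuous (indeed continuous), decreasing in $t$ along the lateral boundary, and maximal on the bottom face, so Lemma \ref{lemma31} applies verbatim and gives that $t \frecciaf K_\varphi^{\mathcal{O}}(x,t) = b(x,t)$ is monotone decreasing for each fixed $x \in \Omega$.

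The main obstacle I anticipate is the simultaneous satisfaction of the barrier condition and the hypotheses of Lemma \ref{lemma31}. The barrier requires $\varphi$ to vanish only at the single point $z_0 = (x_0, t_0)$ with $t_0 \in ]0,T[$, whereas Lemma \ref{lemma31} demands that $\varphi$ attain its \emph{maximum} $M = \sup_{\partial {\mathcal{O}}} \varphi$ on the \emph{entire} bottom face $\Omega \times \{0\}$ and be decreasing in $t$. These are compatible only if the unique zero $z_0$ sits on the lateral boundary at an interior time level, which is exactly the situation of the lemma ($x_0 \in \partial\Omega$, $0 < t_0 < T$), but care is needed: a datum that is strictly decreasing in $t$ and positive everywhere except at $z_0$ must have its zero be the minimum along that lateral line, consistent with the top value $M$ being attained at $t=0$. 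I would therefore devote the bulk of the argument to writing down an explicit $\varphi$ — for instance a product or sum of a nonnegative function of $x$ vanishing only at $x_0$ and a strictly decreasing function of $t$ touching zero only at $t_0$ — and verifying it meets every clause; the potential-theoretic steps ($b > 0$ by the minimum principle, $b \ttende 0$ by regularity, monotonicity by Lemma \ref{lemma31}) are then routine.
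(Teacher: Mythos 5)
Your overall skeleton matches the paper's: build $b$ as a Perron--Wiener solution $K^{\mathcal O}_\varphi$ of a datum that is decreasing in $t$ and maximal on the bottom face, get monotonicity from Lemma \ref{lemma31}, and get $b(z)\ttende 0$ from the $\elle$-regularity of $z_0$. But there are two genuine gaps. First, strict positivity: the minimum principle only yields $b\geq 0$, not $b>0$. Since your datum $\varphi$ is allowed to vanish somewhere on $\partial\mathcal O$, you cannot conclude $b>0$ in $\mathcal O$ without a strong minimum principle (not available in this $\sigma^\ast$-harmonic-space setting) or an explicit strictly positive $\elle$-subharmonic minorant. The paper supplies exactly such a minorant: it takes the function $h$ of Lemma \ref{due!}, with $h(x_0)=0$, $h>0$ off $x_0$ \emph{and} $\elleo h>0$, sets $\widehat h(x,t)=h(x)$ for $t>\delta$ and $\widehat h=M=\sup h$ for $t\leq\delta$ (with $\delta<t_0$), shows via Proposition \ref{duedue} that $\widehat h$ is $\elle$-subharmonic on all of $\mathcal O$, and then uses $\widehat h\leq K^{\mathcal O}_{\widehat h|_{\partial\mathcal O}}=b$ to get $b\geq \widehat h>0$ in $\mathcal O$ (note $h(x)>0$ for every $x\in\Omega$ because $x_0\in\partial\Omega$). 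The sign condition $\elleo h>0$ is essential and is precisely what your candidate $|x-x_0|^2$ lacks: $\elleo|x-x_0|^2$ has no sign in general for these degenerate operators.

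Second, the datum you describe cannot exist as stated. If $\varphi$ is continuous, equal to $M=\sup\varphi$ on the bottom face, nonnegative, decreasing in $t$ along lateral lines, and $\varphi(x_0,t_0)=0$ with $0<t_0<T$, then necessarily $\varphi\equiv 0$ on the whole segment $\{x_0\}\times[t_0,T]$; so the requirement that $\varphi$ vanish \emph{only} at $z_0$ is unachievable, and your explicit example $\varphi(x,t)=|x-x_0|^2+(c-t)$ satisfies neither the constancy on the bottom face nor the vanishing at $z_0$. The paper avoids this tension by not asking the datum to vanish only at $z_0$: its $\widehat h|_{\partial\mathcal O}$ vanishes on all of $\{x_0\}\times\, ]\delta,T]$ and is merely upper semicontinuous (it jumps at $t=\delta$), which is enough for Lemma \ref{lemma31}; continuity of the datum \emph{near} $z_0$ together with regularity of $z_0$ still gives $b(z)\ttende \widehat h(z_0)=0$. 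Your invocation of Bouligand's theorem at the outset is also never used in the rest of your argument and can be dropped.
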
 

\begin{proof} 
Let $Y$ be a bounded open set such that $\overline{\Omega} \subseteq Y\subseteq \overline{Y}\subseteq X$ and let $x_0\in\Omega$. 
By Lemma \ref{due!} there exists a function $h\in C^\infty(Y,\erre)$ such that 
\begin{itemize}
\item[$(a)$]  $h(x_0)=0$ and $h(x)>0\quad \forall x\neq x_0.$

\item[$(b)$]  $\elleo h >0$ in $\Omega$. 

\end{itemize}
For a fixed $\delta\in ]0,T_0[$ let us define 

\begin{equation*}  \widehat{h}:\overline{\Omega} \times  [0,T]  \ttende \erre,\quad \widehat{h}(x,t)= \begin{cases}
 h(x) \ifff \delta <t\le T,    \\  \ \, M \ \ifff 0 \le t\le \delta,  \end{cases}\end{equation*} 
where $M=\sup_{\overline \Omega} h.$ 

This function is $\elle$-superharmonic in ${\mathcal{O}}_1:= \Omega\times ]0,\delta[$ and in ${\mathcal{O}}_2:= \Omega\times ]\delta,T[$ since 
$$\elle \widehat{h}=0\inn {\mathcal{O}}_1\andd \elle \widehat{h}= \elleo h > 0\inn {\mathcal{O}}_2.$$ 
On the other hand,

$$ \limsup_{\substack{(x,t)\ttende (\xi,\delta)\\ t<\delta}}\widehat{h}(x,t)=M=  \limsup_{\substack{(x,t)\ttende (\xi,\delta)}}\widehat{h}(x,t).$$
Then, by Proposition \ref{duedue}, $$\widehat{h}\in \underline{\cappa} (\Omega\times ]0,T[).$$
Moreover, 
$$t\frecciaf \widehat{h}(x,t) \mbox{ \ is monotone decreasing,}$$ 
for every fixed $x\in\overline{\Omega}.$ 

Let us now put
$$b:=K^{\mathcal{O}}_{\widehat{h}|\partial {\mathcal{O}}},$$
which is well defined and $\cappa$-harmonic in ${\mathcal{O}}$, since $\widehat{h}|_{\partial {\mathcal{O}}}$ is bounded and upper semicontinuous.

Moreover, by Lemma \ref{lemma31}, $t\frecciaf b(x,t)$ is monotone decreasing for every fixed $x\in\Omega.$ 

It remains to show that $b$ is an $\elle$-barrier for ${\mathcal{O}}$ at $z_0$.  To this end we first remark that 
$$\widehat{h} \in \underline{\mathcal{U}}^{\mathcal{O}}_{\widehat{h}|_{\partial {\mathcal{O}}}},$$
so that 
$$\widehat{h}\le b\inn {\mathcal{O}}.$$
This implies $b>0$ in ${\mathcal{O}}$ since $\widehat{h}$ is strictly positive.

On the other hand, since $\widehat{h}|_{\partial {\mathcal{O}}}$ is continuous in a neighborhood of $z_0$, and $z_0$ is $\elle$-regular for ${\mathcal{O}}$, 
$$\lim_{z\ttende z_0} b(z)=\lim_{z\ttende z_0} K^{\mathcal{O}}_{\widehat{h}|_{\partial {\mathcal{O}}}} (z)= \widehat{h} (z_0)= \phi(x_0)=0.$$ 
This completes the proof.\end{proof}

\section{Proof of Theorem 1.1}
Let us keep the notation of Theorem 1.1  and split the proof in two steps.
\begin{itemize}
\item[(1)] {\it If $x_0\in\partial\Omega$ is $\elle_0$-regular for $\Omega$, then $z=(x_0,t_0)$ is $\elle$-regular for ${\mathcal{O}}.$}
\end{itemize}
Indeed, the $\elleo$-regularity of $x_0$ implies the existence of a $\elle_0$-harmonic barrier for $\Omega$ at $x_0$, i.e. a function $b_0\in\cappa(\Omega)$ such that $$b_0>0\inn \Omega \andd b_0\ttende 0 \ass x\ttende x_0.$$

It follows that 
$$\widehat{b}(x,t)=b_0(x),\quad (x,t)\in {\mathcal{O}},$$
is $\elle$-harmonic in ${\mathcal{O}}$ ($\elle \widehat{b} = \elleo b_0=0$). Moreover,

$$\widehat{b}>0\inn {\mathcal{O}}\andd \widehat{b}(x,t)=b_0(x)\ttende 0\ass (x,t)\ttende (x_0,t_0).$$

Hence, $\widehat{b}$ is an $\elle$-barrier function for ${\mathcal{O}}$ at $z_0$ and, as a consequence, $z_0$ is \mbox{$\elle$-regular} for ${\mathcal{O}}$.

\begin{itemize}
\item[(2)] {\it If  $z=(x_0,t_0)$, $x_0\in\Omega, 0<t_0<T,$  is $\elle$-regular for ${\mathcal{O}}$,  then $x_0$ is \mbox{$\elleo$-regular} for $\Omega$.}
\end{itemize}

Indeed, by Lemma \ref{lemma32}, there exists a function $b\in\cappa ({\mathcal{O}})$ such that $b>0,$\  \mbox{$b(z)\ttende 0$} as $z\ttende z_0$ and 

$$t\frecciaf b(x,t) \mbox{ \ is monotone decreasing}\quad \forall x\in \Omega.$$ 

It follows that, letting $b_0(x)=b(x,t_0),$ 
$$\elle_0 b_0= \elle b + \partial_t b = \partial_t b \le 0 \inn \Omega.$$ 
Hence,  $b_0$ is $\elleo$-superharmonic  in $\Omega.$ Moreover, $b_0>0$ in $\Omega$ and
$$b_0(x)=b(x,t_0)\ttende 0 \ass x\ttende x_0.$$
Therefore, $b_0$ is an $\elle$-barrier for $\Omega$ at $x_0$ , and $x_0$ is $\elle_0$-regular.

\section{An application to degenerate Ornstein--Uhlenbeck operators}

 In $\erre^N$ let us consider the partial differential operator 
 
 \begin{eqnarray}\label{OU}
L_0 = \mathrm{div}\left(  A\nabla\right)  +\left\langle B x,\nabla\right\rangle,
\end{eqnarray}
where  $A = (a_{ij})_{i,j = 1, \dots , N}$ and $B= (b_{ij})_{i,j = 1, \dots , N}$  are  $N\times N$  real constant  matrices, $x=(x_1, \ldots, x_N)$  is the point of $\erre^{N},$ $\mathrm{div}$, $\nabla$  and $\langle\ , \ \rangle$ denote the divergence, the Euclidean gradient  and the inner product in $\erre^{N}$, respectively.

We suppose that the matrix $A$ is symmetric, positive semidefinite and  that it assumes the following block form 

\begin{equation*}
A=%
\begin{bmatrix}
A_{0} & 0\\
0 & 0
\end{bmatrix},\label{A}%
\end{equation*}
$A_{0}$ being  a  $p_{0}\times p_{0}$ strictly positive definite matrix with $1\le p_{0}\leq N$. Moreover, we assume the matrix $B$ to be of the following type 

\begin{equation}
B=%
\begin{bmatrix}
0 & 0 & \ldots & 0  & 0 \\
B_{1}  & 0 &  \ldots & 0  & 0  \\
0 & B_{2}& \ldots &  0 &  0\\
\vdots & \vdots & \ddots & \vdots & \vdots\\
0 & 0 & \ldots &  B_{r} &  0
\end{bmatrix},
\label{B}%
\end{equation}
where $B_{j}$ is a $p_{j-1}\times p_{j}$ block with rank $p_{j}$ ($j=1,2,...,r$),  $p_{0}\geq p_{1}\geq...\geq p_{r}\geq1$ and $p_{0}+p_{1}+...+p_{r}=N$.

Finally, letting 

$$E(s):= \exp(-sB),\quad s\in\erre,$$

we assume that the following condition is satisfied

\begin{equation*}\label{C(t)}
C(t)=\int_{0}^{t} E(s)AE^T(s)\,ds \mbox{ \ is strictly positive definite for every $t>0$.} 
\end{equation*}

As it is quite well known this condition implies the hypoellipticity of $L$, see  \cite{lanconelli_polidoro_1994}. In that paper it is proved that the evolution counterpart of $L_0$, i.e. the operator 
$$L=L_0-\de_t \inn \erre^{N+1},$$ 
is left translation invariant and homogeneous of degree two on the homogeneous group 
$$\mathbb{K}= (\erre^{N+1}, \circ, \delta_\lambda)$$ 
with composition law $\circ$ defined as follows
$$(x,t)\circ (x',t')=(x'+E(t')x,t+t')$$
and dilation $\delta_\lambda,\lambda>0,$ of this kind

 \begin{eqnarray*}\delta_\lambda :\erre^{N+1}\ttende\erre^{N+1},\quad \delta_\lambda (x,t)&\,=&
\delta_\lambda (x^{(p_0)},x^{(p_1)},\ldots,x^{(p_r),t })\\&:=&(\lambda
x^{(p_0)},\lambda^3
x^{(p_1)},\ldots,\lambda^{2r+1}x^{(p_n)}, \lambda^2 t ),\end{eqnarray*}
where $x^{(p_i)}\in\erre^{p_i},\  i=0,\ldots,r.$

The natural number $q:=Q+2$, with 
\begin{equation}\label{Q} Q:=p_0+3p_1+\ldots + (2r+1) p_r, 
\end{equation}
is the homogenous dimension of $\mathbb{K}.$ In what follows we will write 

\begin{eqnarray*} \delta_\lambda (z)=\delta_\lambda (x,t)= (D_\lambda (x), \lambda^2 t ),\end{eqnarray*}
where, 

\begin{eqnarray*}D_\lambda (x)= (\lambda
x^{(p_0)},\lambda^3
x^{(p_1)},\ldots,\lambda^{2r+1}x^{(p_n)}, \lambda^2 t ).\end{eqnarray*}

Obviously,  $(D_\lambda)_{\lambda >0}$ is a group of dilations in $\erren$. The natural number $Q$ in \eqref{Q} is the homogeneous dimension of $\erren$ w.r.t. the group $(D_\lambda)_{\lambda >0}$. 

The operator $L$ has a fundamental solution $\Gamma$ given by 
$$\Gamma(z_0,z):= \gamma (z^{-1} \circ z_0),\quad z,\ z_0 \in \erre^{N+1},$$ where $\circ$ is the composition law in $\mathbb{K}$, 
$z^{-1}$ denotes the opposite of $z$ in $\mathbb{K}$ and, for a suitable $C_Q>0,$

\begin{equation*}  \gamma(x,t)= \begin{cases} 0\quad   \ifff  t\le 0,   \\  \\ \frac{C_Q}{t^Q} \exp \left(-\frac{1}{4} \left|D_{\frac{1}{\sqrt t}} (x)\right|_C^2\right)  \ \ifff t>0,   \end{cases}\end{equation*} 

where, 
$$|y|_C^2= \langle C^{-1}(1) y, y\rangle, $$
see again  \cite{lanconelli_polidoro_1994}.

It is quite easy to recognise that our Tikhonov-type theorem applies to the operators  $L_0$ and $L$.  Hence, if $\Omega$ is a bounded open subset of $\erre^N$, $x_0\in\partial\Omega$ and $t_0\in ]-T,T[,  T>0,$ we have:
\begin{eqnarray*} \mbox{\it $x_0$ is $L_0$-regular for $\Omega$}\end{eqnarray*} \begin{eqnarray*}  \mbox{ if and only if }   \end{eqnarray*}  \begin{eqnarray*}  \mbox{\it $z_0=(x_0,0)$ is $L$-regular for ${\mathcal{O}}_T:= \Omega\times ]-T,T[$}. \end{eqnarray*} 

On the other hand, in  \cite[Corollary 1.3]{kogoj_lanconelli_tralli_2017} it is proved that 
\begin{eqnarray*} \mbox{\it $z_0$ is $L$-regular for ${\mathcal{O}}_T$}\end{eqnarray*} 
if, for a $\mu\in ]0,1[,$ the following condition holds:

\begin{equation}\label{wl} \sum_{k=1}^\infty \frac{|{\mathcal{O}}^c_{T,k} (z_0)|}{\mu^{\alpha(k)\frac{Q+2}{Q}} }=\infty,\end{equation} 
 where $\alpha(k)=k\log k$, $|\cdot |$ denotes the Lebesque measure in $\erre^{N+1}$ and 
 
 $${\mathcal{O}}^c_{T,k} (z_0)= \left\{  z \neq {\mathcal{O}}_T\ : \ \left(\frac{1}{\mu}\right)^{\alpha(k)} \le \Gamma (z_0,z)\le \left(\frac{1}{\mu}\right)^{\alpha(k+1)} \right\}.$$
 We express now this condition in a more explicit form. To this end we let 
 
 \begin{equation}\label{serve?} A_k^c(x_0) =\left\{ (x,t)\in \erre^{N+1} \ |\ x\notin \Omega, \gamma(z^{-1}\circ (x,0)) \geq \left(\frac{1}{\mu}\right)^{\alpha(k)} \right\}.
 \end{equation} 
Then, 
\begin{eqnarray*} {\mathcal{O}}^c_{T,k}((x_0,0))&= &(A_k(x_0)\meno A_{k+1}(x_0)) \cup \left\{ \gamma = \left( \frac{1}{\mu}\right)^{\alpha (k+1)}\right\} \\ &\supseteq& A_k(x_0)\meno A_{k+1} (z_0). \end{eqnarray*}

Hence, denoting for the sake of brevity,
$$d_k = | A_k(z_0)| \andd \nu=\mu^{\frac{(Q+2)}{Q}},$$
condition \eqref{wl} is satisfied if 

\begin{equation} \label{strat}\sum_{k=1}^\infty \frac{d_k- d_{k+1}}{\nu^{\alpha(k)}} = \infty.
\end{equation} 
On the other hand, for every $p\in\enne,$ 
\begin{eqnarray*} \sum_{k=1}^\infty \frac{d_k- d_{k+1}}{\nu^{\alpha(k)}}\end{eqnarray*} \begin{eqnarray*} = \frac{d_1}{\nu^{\alpha(1)}} + d_2  \left( \frac{1}{\nu^{\alpha(2)}} -  \frac{2}{\nu^{\alpha(1)}}\right)+\cdots + d_p  \left( \frac{1}{\nu^{\alpha(p)}} -  \frac{2}{\nu^{\alpha(p-1)}}\right) - \frac{d_{p+1}}{\nu^{\alpha(p)}}
\end{eqnarray*} 
 \begin{eqnarray*}  \le (1 -\nu^{\log 2} ) \sum_{k=1}^p  \frac{d_k}{\nu^{\alpha(k)}}  - \frac{d_{p+1}}{\nu^{\alpha(p)}}.
\end{eqnarray*}
Then, since $ \dfrac{d_{p+1}}{\nu^{\alpha(p)}}\ttende 0$  as $p\tende \infty$ (as we will see later) condition \eqref{strat} is satisfied if 
\begin{equation} \label{strattt}\sum_{k=1}^\infty \frac{d_k}{\mu^{\alpha(k)}} = \infty.
\end{equation} 

Keeping in mind the very definition of $\Gamma$, we have that $ A_k(x_0)$ is equal to the following set 

\begin{equation*} \left\{ (x,t)\in \erre^{N+1} \ |\ x\in \Omega^c, t <0, \left|D_{\frac{1}{\sqrt{|t|}}} (x_0- E(|t|x)) \right|^2_C<
2Q\log \frac{(C_Q\mu^{\alpha(k)})^{\frac{2}{Q}}}{t}   \right\},
 \end{equation*} 
whereby, with the change of variables $y:=x_0-E(|t|)x,$ $\tau=-t$, we get 

\begin{eqnarray}\label{d} \quad d_k=\left| \left\{ (y,\tau)\ |\ \tau>0,\ y\in x_0 - E(\tau)(\Omega^c),\left|D_{\frac{1}{\sqrt{|\tau|}}}\right|_C^2   < 2Q\log \frac{R_k}{\tau}       \right \}\right|.\end{eqnarray} 
Here $R_k = (C_Q \mu^{\alpha(k)})^{\frac{2}{Q}}$ and $\Omega^c:=\erreu\meno \Omega$.

Therefore, 

\begin{eqnarray*} d_k &\le& \left| \left\{ (y,\tau)\ |\ \tau>0,\  \left|D_{\frac{1}{\sqrt{|\tau|}}}\right|_C^2   < 2Q\log \frac{R_k}{\tau}       \right \}\right|
\\ && \mbox{(using the change of variables $y=D_{\sqrt{R_k}} (\xi), \tau=R_k s)$} 
\\ &=& R_k^{\frac{Q+2}{Q}} \left| \left\{ (\xi,s)\ |\ s>0, \left| D_{\sqrt{\frac{1}{s}}} (\xi)\right| \le 2Q\log \frac{1}{s}\right\}\right|.
  \end{eqnarray*} 
Hence, for a suitable dimensional constant $C_Q^*>0,$ 

$$d_k \le C_Q^* \mu^{\alpha(k)\frac{Q+2}{Q}} = C_Q^* \nu^{\alpha(k)}.$$
Then,

$$0\le \frac{d_{p+1}}{\nu^{\alpha(p)} } \le C_Q^*  \mu^{\alpha(p+1)- \alpha(p)} \ttende 0 \ass p\ttende \infty,$$
since $0<\mu<1$  and  $\alpha(p+1)- \alpha(p)= p\log\frac{p+1}{p} + \log{(p+1)} \ttende \infty.$

We have completed the proof of the following criterion:\\
{ \it Let $L$ be the Ornstein--Uhlenbeck-type operator in \eqref{OU} and let $\Omega\subseteq \erren$ be a bounded open set. Then, a point $x_0\in\partial\Omega$ is $L$-regular for $\Omega$ if 
\begin{equation}\label{kavouri} \sum_{k=1}^\infty \frac{d_k(\Omega,x_0)}{\mu^{\alpha(k)\frac{Q+2}{2}}} =\infty,\end{equation}
where $d_k(\Omega,x_0):=d_k$ is defined in \eqref{d}.}

We note that condition \eqref{kavouri} holds  if $\Omega$ satisfies  the exterior cone-type condition introduced in \cite{kogoj_2019}.  Geometric boundary regularity criteria for wide classes of hypoelliptic evolution operators are also established in \cite{manfr}, \cite{LU},  \cite{ltu_2016} and \cite{kogoj_2017}.

\section*{Acknowledgment}

The author has been partially supported by the Gruppo Nazionale per l'Analisi Matematica, la Probabilit\`a e le
loro Applicazioni (GNAMPA) of the Istituto Nazionale di Alta Matematica (INdAM).

\bibliographystyle{alpha} 
%\bibliography{bibliografia}

\begin{thebibliography}{KLT18}

\bibitem[Are00]{arendt}
W.~Arendt.
\newblock Resolvent positive operators and inhomogeneous boundary conditions.
\newblock {\em Ann. Scuola Norm. Sup. Pisa Cl. Sci. (4)}, 29(3):639--670, 2000.

\bibitem[BG15]{ban_gar}
A.~Banerjee and N.~Garofalo.
\newblock On the {D}irichlet boundary value problem for the normalized
  {$p$}-{L}aplacian evolution.
\newblock {\em Commun. Pure Appl. Anal.}, 14(1):1--21, 2015.

\bibitem[BLU07]{BLU}
A.~Bonfiglioli, E.~Lanconelli, and F.~Uguzzoni.
\newblock {\em Stratified {L}ie groups and potential theory for their
  sub-{L}aplacians}.
\newblock Springer Monographs in Mathematics. Springer, Berlin, 2007.

\bibitem[BV62]{babuska}
I.~Babu\v{s}ka and R.~V\'{y}born\'{y}.
\newblock Regul\"{a}re und stabile {R}andpunkte f\"{u}r das {P}roblem der
  {W}\"{a}rmeleitungsgleichung.
\newblock {\em Ann. Polon. Math.}, 12:91--104, 1962.

\bibitem[CC72]{CC}
C.~Constantinescu and A.~Cornea.
\newblock {\em Potential theory on harmonic spaces}.
\newblock Springer-Verlag, New York-Heidelberg, 1972.
\newblock With a preface by H. Bauer, Die Grundlehren der mathematischen
  Wissenschaften, Band 158.

\bibitem[CY77]{chan_young}
C.~Y. Chan and E.~C. Young.
\newblock Regular regions for parabolic and elliptic equations.
\newblock {\em Portugal. Math.}, 36(1):7--12, 1977.

\bibitem[Ful56]{fulks_56}
W.~Fulks.
\newblock A note on the steady state solutions of the heat equation.
\newblock {\em Proc. Amer. Math. Soc.}, 7:766--770, 1956.

\bibitem[Ful57]{fulks}
W.~Fulks.
\newblock Regular regions for the heat equation.
\newblock {\em Pacific J. Math.}, 7:867--877, 1957.

\bibitem[KL96]{kilp_lind}
T.~Kilpel\"{a}inen and P.~Lindqvist.
\newblock On the {D}irichlet boundary value problem for a degenerate parabolic
  equation.
\newblock {\em SIAM J. Math. Anal.}, 27(3):661--683, 1996.

\bibitem[KLT18]{kogoj_lanconelli_tralli_2017}
A.~E. Kogoj, E.~Lanconelli, and G.~Tralli.
\newblock Wiener-{L}andis criterion for {K}olmogorov-type operators.
\newblock {\em Discrete Contin. Dyn. Syst. Ser. A}, 38(5):2467--2485, 2018.

\bibitem[Kog17]{kogoj_2017}
A.~E. Kogoj.
\newblock On the {D}irichlet problem for hypoelliptic evolution equations:
  {P}erron--{W}iener solution and a cone-type criterion.
\newblock {\em J. Differential Equations}, 262(3):1524--1539, 2017.

\bibitem[Kog19]{kogoj_2019}
A.~E. Kogoj.
\newblock A {Z}aremba-type criterion for hypoelliptic degenerate
  {O}rnstein--{U}hlenbeck operators.
\newblock {\em Discrete Contin. Dyn. Syst. Ser. S}, in press, 2019.

\bibitem[KP16]{kogoj_polidoro}
A.~E. Kogoj and S.~Polidoro.
\newblock Harnack inequality for hypoelliptic second order partial differential
  operators.
\newblock {\em Potential Anal.}, 45(3):545--555, 2016.

\bibitem[LP94]{lanconelli_polidoro_1994}
E.~Lanconelli and S.~Polidoro.
\newblock On a class of hypoelliptic evolution operators.
\newblock {\em Rend. Sem. Mat. Univ. Politec. Torino}, 52(1):29--63, 1994.
\newblock Partial differential equations, II (Turin, 1993).

\bibitem[LTU17]{ltu_2016}
E.~Lanconelli, G.~Tralli, and F.~Uguzzoni.
\newblock Wiener-type tests from a two-sided gaussian bound.
\newblock {\em Annali di Matematica Pura ed Applicata}, 196(1):217--244, 2017.

\bibitem[LU10]{LU}
E.~Lanconelli and F.~Uguzzoni.
\newblock Potential analysis for a class of diffusion equations: a {G}aussian
  bounds approach.
\newblock {\em J. Differential Equations}, 248(9):2329--2367, 2010.

\bibitem[Man97]{manfr}
M.~Manfredini.
\newblock The {D}irichlet problem for a class of ultraparabolic equations.
\newblock {\em Adv. Differential Equations}, 2(5):831--866, 1997.

\bibitem[Neg83]{negrini}
P.~Negrini.
\newblock Punti regolari per aperti cilindrici in uno spazio {$\beta
  $}-armonico.
\newblock {\em Boll. Un. Mat. Ital. B (6)}, 2(2):537--547, 1983.

\bibitem[OR73]{OR}
O.~A. Ole\u{\i}nik and E.~V. Radkevi\v{c}.
\newblock {\em Second order equations with nonnegative characteristic form}.
\newblock Plenum Press, New York-London, 1973.
\newblock Translated from the Russian by Paul C. Fife.

\bibitem[Tik38]{tychonov}
A.~N. Tikhonov.
\newblock The heat equation for several variables.
\newblock {\em Byull. Mosk. Gos. Univ. Mat. Mekh.}, 1(9):1--49, 1938.

\end{thebibliography}

\end{document}